\documentclass{amsart}
\usepackage{amssymb,amsthm,amscd}

\newcommand{\fp}{{\mathfrak p}}
\newcommand{\F}{{\mathbb F}}
\newcommand{\Z}{{\mathbb Z}}

\newcommand{\Q}{{\mathbb Q}}

\newcommand{\cO}{{\mathcal O}}

\newcommand{\eps}{\varepsilon}
\newcommand{\fa}{{\mathfrak a}}
\newcommand{\fb}{{\mathfrak b}}

\newcommand{\bmu}{\overline{\mu}}

\newcommand{\Cl}{{\operatorname{Cl}}}

\newtheorem{thm}{Theorem}
\newtheorem{prop}{Proposition}
\newtheorem{lem}{Lemma}

\title{Relations in the $2$-Class Group of Quadratic Number Fields}
\author{F. Lemmermeyer}
\begin{document}

\begin{abstract}
We construct a family of ideals representing ideal classes of
order $2$ in quadratic number fields and show that relations 
between their ideal classes are governed by certain cyclic 
quartic extensions of the rationals.
\end{abstract}

\maketitle

\begin{center} \today \end{center}

\section*{Introduction}

Let $m = p_1 \cdots p_t$ be a product of pairwise distinct primes 
$p_j \equiv 1 \bmod 4$. Then $m$ can be written as a sum of two
squares, say $m = a_j^2 + 4b_j^2$, in $2^{t-1}$ essentially different
ways (i.e., neglecting the signs of $a_j$ and $b_j$). 

We define ideals $\fa_j = (2b_j + \sqrt{m},a_j)$ in the ring of 
integers of the quadratic number field $K = \Q(\sqrt{m}\,)$. Since 
\begin{align*}
  \fa_j^2 & = ((2b_j+\sqrt{m}\,)^2, a_j(2b_j+\sqrt{m}\,), a_j^2) \\
          & = ((2b_j+\sqrt{m}\,)^2, a_j(2b_j+\sqrt{m}\,), m - 4b_j^2) \\
          & = (2b_j+\sqrt{m}\,) (2b_j+\sqrt{m}, a_j, 2b_j-\sqrt{m}\,) \\
          & = (2b_j+\sqrt{m}\,) (4b_j,a_j,2b_j-\sqrt{m}\,) = (2b_j+\sqrt{m}\,)
\end{align*}
is principal, the ideals $\fa_j$ have order dividing $2$.

The "canonical" ideals generating classes of order dividing $2$ are
the products of ramified prime ideals. Letting $\fp_j$ denote the 
prime ideal above the prime $p_j$, we have $\fp_j^2 = (p_j)$. Thus 
each of the $2^t$ ideals
$$ \fb_e = \fp_1^{e_1} \cdots \fp_t^{e_t}, \qquad
                   e = (e_1, \ldots, e_t) \in \F_2^t, $$
generates a class of order dividing $2$. Among these ideal classes
there are two trivial relations coming from the fact that 
$(1) = \prod \fp_j^0$ and $(\sqrt{m}\,) = \prod \fp_j^1$ are principal
ideals (in the usual sense; the ideal class of $(\sqrt{m}\,)$ is principal
in the strict sense if and only if the fundamental unit $\eps$ of $K$ has
norm $-1$).

The following result is well known:

\begin{prop}
There is a nontrivial relation among the ideal classes of the ideals $\fb_e$
if and only of the norm of the fundamental unit $\eps$ of $K$ is $+1$.
\end{prop}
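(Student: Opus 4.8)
The plan is to recast the statement in terms of ambiguous ideal classes and then read off the answer from the unit group of $K$. Put $G = \Gal(K/\Q) = \langle\sigma\rangle$ and let $\phi\colon \F_2^t \to \Cl(K)$, $e \mapsto [\fb_e]$, be the homomorphism whose image is the subgroup generated by the classes of the ramified primes. A relation is an element of $\ker\phi$, and the two trivial relations span the order-$2$ subgroup generated by $\mathbf e = (1,\dots,1)$, since $\fb_{\mathbf 0} = (1)$ and $\fb_{\mathbf e} = (\sqrt m\,)$ are principal. So the proposition is equivalent to $|\ker\phi|>2 \Leftrightarrow N\eps = +1$; the argument will in fact show $|\ker\phi|\in\{2,4\}$, so the dichotomy is $\ker\phi = \langle\mathbf e\rangle$ versus $\ker\phi \supsetneq \langle\mathbf e\rangle$.

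First I would describe $\ker\phi$ concretely. Because $h_\Q = 1$, every $G$-invariant fractional ideal of $K$ is a product of the ramified primes $\fp_j$ with a principal ideal $(c)$, $c\in\Q^\times$; hence the group $I_K^G$ of ambiguous ideals satisfies $I_K^G/I_\Q \cong \F_2^t$ via $e \mapsto \fb_e$, and under this identification $\phi$ is the map induced by $I_K^G \to \Cl(K)$. Consequently $\ker\phi \cong P_K^G/I_\Q$, the group of ambiguous \emph{principal} ideals modulo rational ones.

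The key step is an isomorphism $P_K^G/I_\Q \cong U/W$, where $U = \{u\in E_K : N_{K/\Q}u = 1\}$ and $W = \{\sigma(w)/w : w\in E_K\}$. Given $u\in U$, Hilbert~90 produces $\alpha\in K^\times$ with $\sigma(\alpha)/\alpha = u$; then $(\alpha)$ is a $G$-invariant principal ideal, and I would check that $u\mapsto [(\alpha)]$ is independent of the choice of $\alpha$ (two choices differ by a factor in $\Q^\times$, hence agree modulo $I_\Q$), is a surjective homomorphism (for $(\alpha)\in P_K^G$ the quotient $\sigma(\alpha)/\alpha$ is a unit of norm $1$), and has kernel exactly $W$ (the ideal $(\alpha)$ is rational iff $\alpha = cw$ with $c\in\Q^\times$, $w\in E_K$). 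Under this isomorphism the two trivial relations correspond to the image of $E_\Q = \{\pm1\}$: indeed $\mathbf e\leftrightarrow (\sqrt m\,)$ and $\sigma(\sqrt m\,)/\sqrt m = -1$, whereas $-1\notin W$ because $\sigma(w)=-w$ forces $w\in\Q\sqrt m$, which contains no units.

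It then remains to compute $U/W$ from $E_K = \langle-1\rangle\times\langle\eps\rangle$, using $\sigma(\eps)/\eps = (N\eps)\,\eps^{-2}$. If $N\eps = -1$, then $U = \langle-1,\eps^2\rangle$ and $W = \langle-\eps^{-2}\rangle$, so $U/W$ has order $2$ and is generated by the class of $-1$; hence $\ker\phi = \langle\mathbf e\rangle$ and there is no nontrivial relation. If $N\eps = +1$, then every unit has norm $1$, so $U = E_K$, while $W = \langle\eps^{-2}\rangle = \langle\eps^2\rangle$; thus $U/W = E_K/\langle\eps^2\rangle$ has order $4$ and properly contains the class of $-1$, so a nontrivial relation exists — explicitly, the one coming from any $\alpha$ with $\sigma(\alpha)/\alpha = \eps$. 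I expect the third step to carry the real weight: one must check with some care that the Hilbert~90 construction descends to a well-defined isomorphism onto $\hat H^{-1}(G,E_K)$, and in particular that its kernel is precisely $W$. The remaining bookkeeping — identifying $\ker\phi$ with $P_K^G/I_\Q$, locating the trivial relations, and the final unit computation — is routine; alternatively one may simply invoke the Chevalley ambiguous class number formula, which yields $|\mathrm{Am}_{\mathrm{st}}(K/\Q)| = 2^{t-1}$ or $2^{t-2}$ according as $N\eps = -1$ or $+1$.
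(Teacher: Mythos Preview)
Your argument is correct, and it takes a more structural route than the paper's. You identify $\ker\phi$ with $P_K^G/I_\Q$, construct the isomorphism $P_K^G/I_\Q \cong U/W = \hat H^{-1}(G,E_K)$ via Hilbert~90, and then read off $|U/W|\in\{2,4\}$ from an explicit description of $E_K$. The paper instead argues directly on elements in each direction: given a principal relation $\prod\fp_j^{e_j}=(\alpha)$ it forms the norm-$+1$ unit $\eta=\alpha^2/\prod p_j^{e_j}$ and observes that $\eta$ a square forces the relation to be trivial, so a nontrivial relation produces a nonsquare unit of norm $+1$ and hence $N\eps=+1$; conversely, when $N\eps=+1$ it applies Hilbert~90 once to $\eps$ to obtain $\alpha$ with $(\alpha)$ Galois-invariant, hence a product of ramified primes, and checks this relation cannot be one of the two trivial ones. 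Your approach buys the exact size of $\ker\phi$ and the link to the ambiguous class number formula you mention at the end; the paper's approach is shorter and avoids setting up any cohomological machinery, using only the equivalence between $N\eps=+1$ and the existence of a nonsquare unit of positive norm.
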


\begin{proof}
If $\prod \fp_j^{e_j} = (\alpha)$ is principal, then 
$(\alpha)^2 = \prod p_j^{e_j}$, hence $\eta = \alpha^2/\prod p_j^{e_j}$ 
is a unit with norm $+1$. If $\eta$ is a square, then so is $\prod p_j^{e_j}$,
which is only possible for $e = (0,\ldots,0)$ and $e = (1,\ldots, 1)$. 
Thus if there is a nontrivial relation among the classes of the ramified
ideals, then there is a nonsquare unit with positive norm; this implies
that $N\eps = +1$.

Conversely, if $N\eps = +1$, then $\eps = \alpha^{1-\sigma}$ for some
$\alpha \in \cO_K$ by Hilbert 90, where $\sigma$ is the nontrivial 
automorphism of $K/\Q$. Since $(\alpha)$ is fixed by the Galois group 
of $K/\Q$, the ideal $(\alpha)$ is a 
product of a rational integer and a ramified ideal. Cancelling the rational 
factors we see that we may assume that $(\alpha)$ is a product of ramified
prime ideals. The equations $\alpha = 1$ and $\alpha = \sqrt{m}$ would imply
$\eps = \pm 1$; thus the relation $(\alpha) = \prod \fp_j^{e_j}$ is necessarily
nontrivial.
\end{proof}

The goal of this article is to clarify the relations between the ideals $\fa_j$ and $\fb_e$.
Our main result is the following

\begin{thm}\label{Th1}
Let $K = \Q(\sqrt{m}\,)$ be a quadratic number field, where $m = p_1 \cdots p_t$
is a product of primes $p_j \equiv 1 \bmod 4$. Let $\eps$ denote the fundamental
unit of $K$.
\begin{enumerate}
\item[a)] If $N \eps = -1$, then the ideal classes $[\fa_j]$ are pairwise distinct and
      represent the $2^{t-1}$ classes of order dividing $2$ in $\Cl(K)$. Each ideal
      $\fa_j$ is equivalent to a unique ramified ideal $\fb_e$. In particular, exactly
      one of the $\fa_j$ is principal; if $\fa_j = (\alpha)$, then 
      $$ \eta = \frac{2b_j + \sqrt{m}}{\alpha^2} $$
      is a unit with norm $-1$ (equal to $\eps$ if $\alpha$ is chosen suitably). 
\item[b)] If $N \eps = + 1$, then there is a subgroup $C$ with index $2$ in the 
      group $\Cl(K)[2]$
      of ideal classes of order dividing $2$ such that each class in $C$ is represented
      by two ramified ideals $\fb_e$ (thus $C$ is the group of strongly ambiguous ideal
      classes in $K$). Each class in $\Cl(K)[2] \setminus C$ is represented
      by two ideals $\fa_j$.
\end{enumerate}
\end{thm}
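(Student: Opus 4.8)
The plan is to understand the ideals $\fa_j$ by relating their squares and their Galois conjugates to the ramified primes, and then to count dimensions. First I would record the basic structural facts. From the computation in the introduction, $\fa_j^2 = (2b_j + \sqrt{m}\,)$, so $[\fa_j] \in \Cl(K)[2]$. Next I would examine the conjugate ideal: applying $\sigma$ gives $\fa_j^\sigma = (2b_j - \sqrt{m}\,, a_j)$, and a short manipulation (mirroring the one already displayed) shows $\fa_j \fa_j^\sigma = (a_j)$ up to the square factor, so that $[\fa_j^\sigma] = [\fa_j]^{-1} = [\fa_j]$ — i.e.\ each $[\fa_j]$ is an \emph{ambiguous} ideal class. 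The group $\Cl(K)[2]$ of ambiguous classes has known order $2^{t-1}$ by genus theory (the ambiguous class number formula for $K/\Q$ with $t$ ramified primes, all real since $p_j \equiv 1 \bmod 4$). The subgroup of \emph{strongly} ambiguous classes — those represented by products of ramified primes, i.e.\ by the $\fb_e$ — has order $2^{t-1}$ when $N\eps = -1$ and $2^{t-2}$ when $N\eps = +1$, by Proposition~1 and the exact sequence relating strongly ambiguous classes to $\ker(N)/(\text{units})$; this dichotomy is exactly what produces the two cases of the theorem.

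For part a), assume $N\eps = -1$. Then the $2^{t-1}$ classes $[\fb_e]$ already exhaust $\Cl(K)[2]$, and I must show each $[\fa_j]$ lands in this set and that the $\fa_j$ hit all $2^{t-1}$ classes distinctly. Since $[\fa_j]$ is ambiguous and — when $N\eps=-1$ — every ambiguous class is strongly ambiguous, each $\fa_j$ is equivalent to some $\fb_e$, and uniqueness of $e$ follows because the $\fb_e$ are pairwise inequivalent (no nontrivial relation, again Proposition~1). To see the $[\fa_j]$ are pairwise distinct and thus a full set of representatives, I would argue via the factorization of $2b_j + \sqrt{m}$: the prime ideals dividing $\fa_j$ are precisely those $\fp_i$ with $p_i \mid \gcd(a_j, \ldots)$, and the $2^{t-1}$ essentially distinct representations $m = a_j^2 + 4b_j^2$ correspond bijectively to the $2^{t-1}$ partitions of $\{p_1,\dots,p_t\}$ into two sets $S \sqcup S^c$ with $\prod_{S} p_i \mid a_j$, $\prod_{S^c} p_i \mid b_j$ (up to swapping $S \leftrightarrow S^c$, which is why we get $2^{t-1}$ and not $2^t$). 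This identifies $\fa_j$, up to the ambiguity $e \leftrightarrow \mathbf{1}-e$, with $\fb_{e(S)}$; since $[\fb_e] = [\fb_{\mathbf 1 - e}]$ is exactly the relation $(\sqrt m)$, the two choices give the same class, so the map $j \mapsto [\fa_j]$ is a bijection onto $\Cl(K)[2]$. The principal $\fa_j$ is the one with $e = \mathbf 0$ (equivalently $e = \mathbf 1$); writing $\fa_j = (\alpha)$, squaring gives $(\alpha^2) = (2b_j+\sqrt m)$, so $\eta = (2b_j+\sqrt m)/\alpha^2$ is a unit, and $N\eta = N(2b_j+\sqrt m)/N(\alpha)^2 = (4b_j^2 - m)/|{\cdot}|^2 < 0$ forces $N\eta = -1$; adjusting $\alpha$ by a power of $\eps$ makes $\eta = \eps$.

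For part b), assume $N\eps = +1$. Set $C$ to be the group of strongly ambiguous classes, which has index $2$ in $\Cl(K)[2]$; the nontrivial strongly ambiguous relation from Proposition~1 is why $\#\{e\} = 2^t$ now collapses to $2^{t-2}$ distinct classes, each $\fb_e$-class represented by exactly two tuples $e$ (the relation and its composite with $e \leftrightarrow \mathbf 1 - e$). It remains to show each $[\fa_j]$ lies in the nontrivial coset $\Cl(K)[2]\setminus C$ and that the $\fa_j$ cover that coset two-to-one. For the first point: if some $\fa_j$ were strongly ambiguous, it would be equivalent to a product of ramified primes, hence (by the same $e \leftrightarrow \mathbf 1 - e$ analysis) the representation $m = a_j^2 + 4b_j^2$ would be the trivial one — but there is no representation with $a_j = \pm\sqrt m$ or $b_j = 0$ unless $t$... here I must be careful: the genuinely non-strongly-ambiguous classes are detected by whether $2b_j + \sqrt m$ (generator of $\fa_j^2$) is, modulo squares and rational factors, a totally positive element, and since $N(2b_j+\sqrt m) = 4b_j^2 - m < 0$ it is \emph{not} totally positive; this negative norm is precisely the obstruction that places $[\fa_j]$ outside $C$. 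Finally, the count: there are $2^{t-1}$ ideals $\fa_j$, the target coset has $2^{t-2}$ classes, and the fibers of $j \mapsto [\fa_j]$ have size exactly $2$ by the same partition bookkeeping as in part a) (now the extra relation from Proposition~1 merges pairs that were distinct before). The main obstacle will be pinning down this last dictionary — correctly matching the two descriptions (partitions $S$ of the primes versus the relation structure among the $\fb_e$ and $\fa_j$) and verifying the fibers have size $2$ rather than collapsing further or splitting — since that is where genus theory, the unit norm, and the combinatorics of sums of two squares all have to line up simultaneously.
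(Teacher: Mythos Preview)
Several pieces of your outline are fine and in fact match the paper: the genus-theory count $\#\Cl(K)[2]=2^{t-1}$, the argument that $N(2b_j+\sqrt m)=-a_j^2<0$ forces $[\fa_j]\notin C$ when $N\eps=+1$, and the computation that $\eta=(2b_j+\sqrt m)/\alpha^2$ has norm $-1$. But the step you lean on for distinctness in part~a) and for the two--to--one count in part~b) is based on a false premise.

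You write that ``the prime ideals dividing $\fa_j$ are precisely those $\fp_i$ with $p_i\mid a_j$'' and that the representations $m=a_j^2+4b_j^2$ correspond to partitions $S\sqcup S^c$ with $\prod_{i\in S}p_i\mid a_j$ and $\prod_{i\in S^c}p_i\mid b_j$. In fact $\gcd(a_j,m)=\gcd(b_j,m)=1$: if some $p_i$ divided $a_j$, then from $p_i\mid m=a_j^2+4b_j^2$ one gets $p_i\mid b_j$ and hence $p_i^2\mid m$, contradicting squarefreeness. So no ramified prime divides $a_j$, the ideal $\fa_j$ (which has norm $a_j$) is supported entirely on \emph{unramified} primes, and there is no factorization identifying $\fa_j$ with any $\fb_e$. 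The equivalence $\fa_j\sim\fb_e$ asserted in part~a) is a genuine class-group relation, not an equality of ideals; your ``partition bookkeeping'' does not get off the ground. (A partition structure does exist, but it lives in $\Z[i]$: writing $p_i=\pi_i\bpi_i$, the datum of $\mu=a_j+2b_ji$ is a choice of which of $\pi_i,\bpi_i$ divides $\mu$. Turning that into a statement about ideal classes in $\cO_K$ is precisely the content one has to prove.)

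The paper supplies the missing mechanism via cyclic quartic fields. Proposition~\ref{Pexp} shows that the $2^{t-1}$ representations $m=a_j^2+4b_j^2$ are in bijection with the $2^{t-1}$ cyclic quartic extensions $L_j=\Q(\sqrt{m+2b_j\sqrt m}\,)$ of discriminant $m^3$. Now if $\fa_j\sim\fa_k$, squaring gives $2b_j+\sqrt m=\eta\,\xi^2(2b_k+\sqrt m)$ for some unit $\eta$, so $K(\sqrt{(2b_j+\sqrt m)\sqrt m}\,)=K(\sqrt{\eta(2b_k+\sqrt m)\sqrt m}\,)$. Analyzing $\eta$ modulo squares --- using $N\eps=\pm1$ together with ramification at $2$ and at the infinite place --- forces $\eta$ to be a square when $N\eps=-1$ (hence $L_j=L_k$ and $j=k$), and forces $\eta\in\{1,\eps\}$ up to squares when $N\eps=+1$ (hence each $\fa_j$ is equivalent to at most one other $\fa_k$, and pigeonhole against $\#(\Cl(K)[2]\setminus C)=2^{t-2}$ finishes). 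The quartic fields, not a divisibility dictionary, are what separate the $\fa_j$.
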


The proof of Thm. \ref{Th1} uses certain quadratic extensions of 
$\Q(\sqrt{m}\,)$, namely cyclic quartic subextensions of the field 
$\Q(\zeta_m)$ of $m$-th roots of unity. It is perhaps surprising that 
relations in the class group of $K$ are governed by {\em ramified} 
extensions of $K$; note, however, that if $K_1$ and $K_2$ are two 
cyclic quartic extensions as above, then the compositum $K_1K_2$ 
contains a third quadratic extension $K_3/K$, and that this extension 
is unramified: in fact, it is part of the genus class field of $K$.

Here is an example. Let $m = 5 \cdot 13 \cdot 29 = 1885$; then
$m = 6^2 + 43^2 = 11^2 + 42^2 = 21^2 + 38^2 = 27^2 + 34^2$, and we 
consider the ideals 
$\fa_1 = (6 + \sqrt{m},43)$,
$\fa_2 = (42 + \sqrt{m},11)$,
$\fa_3 = (38 + \sqrt{m},21)$,
$\fa_4 = (34 + \sqrt{m},27)$.
Since $N \eps = +1$, none of these ideals is principal.
In fact we have $\fa_2 \sim \fa_3$ and $\fa_1 \sim \fa_4$.
If $\fp$ denotes the ideal with norm $5$, then the ideal classes of 
order $2$ are represented by $\fp$, $\fa_1$ and $\fa_2 \sim \fa_1 \fp$.

The ramified prime ideal above $29$ is principal (in fact,  
$N(87+2\sqrt{1885}\,) = 29$), those above $5$ and $13$ are not. In 
particular, $\sqrt{\eps} = 2 \sqrt{65} + 3 \sqrt{29}$.

\section{Cyclic Quartic Extensions}
Let $m = a^2 + 4b^2$ be a squarefree odd integer; then it is
the discriminant of the quadratic number field $k = \Q(\sqrt{m}\,)$.
In the following, we will always assume that
$m = p_1 \cdots p_t$, where the $p_j \equiv 1 \bmod 4$ are prime 
numbers. 

Some basic facts concerning the description of abelian extensions
of the rationals via characters can be found in \cite{Wash}.
The field of $m$-th roots of unity has Galois group isomorphic
to $(\Z/m\Z)^\times \simeq \prod (\Z/p_j\Z)^\times$, hence has
$G = (\Z/4\Z)^t$ as a quotient. This group $G$ is the Galois
group of the compositum $F$ of the cyclic quartic extensions inside 
the fields $\Q(\zeta_{p_j})$. The character group $X = X(G)$
is generated by quartic Dirichlet characters $\chi_j = \chi_{p_j}$,
and the cyclic quartic subfields of $L$ (and of $\Q(\zeta_m)$) 
correspond to cyclic subgroups of $X$ with order $4$.  

Let $\chi$ be a character generating such a cyclic group of order $4$,
and let $L/\Q$ be the corresponding cyclic quartic extension.
Then $\chi = \prod \chi_j^{e_j}$ with $0 \le e_j < 4$. The order of 
$\chi$ is $4$ if and only if at least one exponent $e_j$ is odd.
By the conductor-discriminant formula, the field $L$ has 
discriminant $m^3$ if and only if all the $e_j$ are odd. Thus these
characters correspond to vectors $e = (e_1, \ldots, e_t)$ with 
$e_j \in \{1, 3\}$. The cyclic quartic extensions $L/\Q$ inside $F$
with discriminant $m^3$ correspond to subgroups generated by such
characters, and since each subgroup contains two such characters
($\chi$ and $\chi^3$), we have

\begin{prop}
Let $m = p_1 \cdots p_t$ be a product of distinct primes $p_j \equiv 1 \bmod 4$.
Then there are $2^{t-1}$ cyclic quartic extensions $L/\Q$ with conductor $m$
and discriminant $m^3$. 
\end{prop}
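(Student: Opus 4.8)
The plan is to count the cyclic quartic extensions $L/\Q$ with conductor $m$ by counting the corresponding order-$4$ subgroups of the character group $X$, and then to single out those with discriminant $m^3$ by the conductor-discriminant formula. First I would recall that $X$, the character group of $G = (\Z/4\Z)^t$, is itself isomorphic to $(\Z/4\Z)^t$ and is freely generated (as a $\Z/4\Z$-module) by the quartic characters $\chi_1, \ldots, \chi_t$. A character $\chi = \prod \chi_j^{e_j}$ with $0 \le e_j < 4$ has order $4$ precisely when at least one $e_j$ is odd (otherwise $\chi = \prod \chi_j^{e_j}$ lies in the subgroup of squares and has order dividing $2$), and the conductor of $\chi$ is the product of the conductors of the $\chi_j^{e_j}$ with $e_j \ne 0$: this conductor equals $m = p_1 \cdots p_t$ exactly when every $e_j$ is nonzero and hence, combined with the order-$4$ condition applied to each partial product appearing in the conductor-discriminant formula, exactly when every $e_j$ is odd, i.e. $e_j \in \{1,3\}$.

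Next I would pass from characters to fields. Each cyclic quartic subfield $L \subseteq F$ corresponds bijectively to a cyclic subgroup $\langle \chi \rangle \subseteq X$ of order $4$, and such a subgroup is determined by either of its two generators of order $4$, namely $\chi$ and $\chi^3$. (Note $\chi^2$ has order $2$, so the only order-$4$ elements of $\langle\chi\rangle$ are $\chi$ and $\chi^3$.) So the number of relevant subgroups is half the number of relevant generators. The generators giving conductor $m$ and discriminant $m^3$ are the characters with all $e_j \in \{1,3\}$, of which there are $2^t$; moreover $\chi^3$ has exponents $3e_j \bmod 4$, which again all lie in $\{1,3\}$, so this set of $2^t$ characters is closed under $\chi \mapsto \chi^3$ and splits into $2^{t-1}$ pairs. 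Hence there are $2^{t-1}$ such subgroups, and thus $2^{t-1}$ such fields $L$.

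Finally I would check the discriminant claim directly from the conductor-discriminant formula: for a cyclic quartic $L$ with group $\langle\chi\rangle$, $\disc L = f(\chi) f(\chi^2) f(\chi^3)$ (the trivial character contributing $1$). When all $e_j$ are odd, $\chi$ and $\chi^3$ are primitive mod $m$ with conductor $m$, and $\chi^2 = \prod \chi_j^{2e_j}$ is the product of the quadratic characters mod $p_j$ (since $2e_j \equiv 2 \bmod 4$), hence is the quadratic character of conductor $m$ cutting out $k = \Q(\sqrt m)$. Therefore $\disc L = m \cdot m \cdot m = m^3$, as asserted; and conversely any order-$4$ character with some $e_j \in \{0,2\}$ has conductor strictly dividing $m$.

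The main obstacle — really the only subtle point — is the clean bookkeeping in the second paragraph: making sure that no order-$4$ subgroup of $X$ is counted twice and none is omitted, i.e. that the map from the $2^t$ admissible characters to subgroups is exactly two-to-one. This hinges on the elementary observation that in a cyclic group of order $4$ there are precisely two elements of order $4$, together with the stability of the exponent set $\{1,3\}^t$ under multiplication by $3$; once that is in place the count $2^t/2 = 2^{t-1}$ is immediate.
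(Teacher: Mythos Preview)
Your argument is correct and follows the same route as the paper: identify cyclic quartic subfields with order-$4$ cyclic subgroups of the character group, show via the conductor--discriminant formula that discriminant $m^3$ forces every exponent $e_j$ to be odd, count the $2^t$ such characters, and halve because each subgroup has exactly two order-$4$ generators. You spell out two points the paper leaves implicit --- that $\{1,3\}^t$ is stable under $e \mapsto 3e \bmod 4$, and the explicit computation $\disc L = f(\chi)f(\chi^2)f(\chi^3) = m^3$ --- but the strategy is identical.
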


These extensions can be constructed explicitly:

\begin{prop}\label{Pexp}
Let $m = p_1 \cdots p_t$ be a product of pairwise distinct primes
$p \equiv 1 \bmod 4$. Then there exist $2^{t-1}$ ways of writing
$m = a_j^2 + 4b_j^2$ as a sum of two squares (up to sign).
For each $j$, the extensions
$$ L = \Q(\sqrt{m + 2b_j \sqrt{m}\,}\,) $$
are the $2^{t-1}$ different cyclic quartic extension of $\Q$
with discriminant $m^3$.
\end{prop}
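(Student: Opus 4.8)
The plan is to verify three things: (i) that $L = \Q(\sqrt{m+2b_j\sqrt m}\,)$ is a genuine quartic extension of $\Q$; (ii) that it is cyclic over $\Q$; and (iii) that its discriminant is $m^3$ — equivalently, by the conductor--discriminant formula and the previous proposition, that its conductor is $m$ and it is one of the $2^{t-1}$ fields already counted. Since the number of factorizations $m = a_j^2 + 4b_j^2$ up to sign is also $2^{t-1}$ (this is classical: each $p_j\equiv 1\bmod 4$ contributes a sign-choice in Gaussian integers, and complex conjugation removes one overall sign), a counting argument will then finish the proof provided the $2^{t-1}$ fields produced this way are pairwise distinct.

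First I would set $\alpha = m + 2b_j\sqrt m$ and $\beta = \sqrt\alpha$, so $k = \Q(\sqrt m\,)\subset L = \Q(\beta)$. The four conjugates of $\beta$ over $\Q$ are $\pm\sqrt{m+2b_j\sqrt m}$ and $\pm\sqrt{m-2b_j\sqrt m}$; to see $[L:\Q]=4$ I must check $\alpha$ is not a square in $k$, which follows because $N_{k/\Q}(\alpha) = m^2 - 4b_j^2 m = m(m-4b_j^2) = m a_j^2$ is not a square in $\Q$ (as $m$ is squarefree $>1$). For cyclicity, the key computation is that $\sqrt{\alpha}\cdot\sqrt{\alpha^\sigma} = \sqrt{N_{k/\Q}(\alpha)} = a_j\sqrt m \in k$, where $\sigma$ is the nontrivial automorphism of $k/\Q$; hence $\sigma$ lifts to an automorphism of $L$ sending $\beta \mapsto a_j\sqrt m/\beta$, and one checks its square is $\beta\mapsto -\beta$, so $\Gal(L/\Q)\cong\Z/4\Z$ rather than $(\Z/2\Z)^2$. (Here it matters that $a_j\sqrt m/\beta$ has the right conjugate; this is the standard criterion for $\Q(\sqrt{r+s\sqrt d}\,)/\Q$ to be cyclic, namely that $r^2-s^2d$ be $d$ times a square.)

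The main obstacle is the discriminant/conductor claim: I want $\disc(L) = m^3$, not a proper multiple. The cleanest route is ramification-theoretic. Since $L/\Q$ is abelian with $k\subset L$ and $\disc(k)=m$, the conductor of $L$ is divisible only by primes dividing $m$ (one should rule out ramification at $2$: although $\alpha = m+2b_j\sqrt m$ does not look like an algebraic integer issue, $2$ is unramified in $k$ since $m\equiv 1\bmod 4$, and the relative extension $L/k$ is generated by $\sqrt\alpha$ with $\alpha$ a unit times something coprime to $2$ — more precisely $(\alpha)$ is coprime to $2$ in $\cO_k$ because $N_{k/\Q}(\alpha)=ma_j^2$ is odd — so $L/k$ is unramified at the primes above $2$, giving conductor prime to $2$). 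For an odd prime $p\mid m$: in $k$ it ramifies, $\fp^2=(p)$, and $v_\fp(\alpha) = v_\fp(m+2b_j\sqrt m)$; writing $\sqrt m = \pi$ a uniformizer at $\fp$ with $\pi^2 = pu$ gives $\alpha \equiv 2b_j\pi \bmod \fp^2$ when $p\nmid b_j$, so $v_\fp(\alpha)=1$ and $\fp$ ramifies in $L/k$; when $p\mid b_j$ (so $p\nmid a_j$) one has $\alpha\equiv m \bmod \fp^{?}$ and $v_\fp(\alpha)=2$, i.e. $(\alpha)=\fp^2\cdot(\text{unit-ish})$, and I must check $\alpha/p$ (or $\alpha\pi^{-2}$) is a nonsquare unit mod $\fp$ so that $\fp$ still ramifies in $L$. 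In all cases every $p\mid m$ ramifies in $L/\Q$ with ramification index $4$, wild ramification is absent (odd $p$, tame), so each contributes exactly $p^3$ to $\disc(L)$ by the conductor-discriminant formula applied to the four characters $\chi^0,\chi,\chi^2,\chi^3$ of $\Gal(L/\Q)$ — the ramified quadratic subfield is $k$ contributing $\chi^2$ with conductor exponent $1$, and the two quartic characters $\chi,\chi^3$ each contribute conductor exponent $1$ at $p$. This yields $\disc(L) = \prod_{p\mid m} p^3 = m^3$. Finally, distinctness of the $2^{t-1}$ fields: two such $L$, $L'$ coincide only if $\alpha'/\alpha$ or $\alpha'/\alpha^\sigma$ is a square in $k$; comparing the ramified quadratic subfields forces it to be a square in $k$, and then $N_{k/\Q}$ gives $a_j'^2/a_j^2$ or $a_j'^2 a_j^2/m^2$ a square in $\Q$, pinning down $|a_j| = |a_{j'}|$ hence $j=j'$ — so the map from sum-of-two-squares representations to quartic fields is injective, and since both sets have size $2^{t-1}$ it is a bijection, completing the proof.
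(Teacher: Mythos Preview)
Your approach—verify cyclicity via the classical $r^2-s^2d=d\cdot\square$ criterion and then compute ramification prime by prime—is quite different from the paper's, which parametrizes the cyclic quartic extensions by Kummer theory over $\Q(i)$ (writing $L(i)=\Q(i)(\sqrt[4]{\mu\bar\mu^3}\,)$ with $\mu=a+2bi$) and then descends to $\Q$ using the identity $2(x\sqrt{A}+y\sqrt{B})(x\sqrt{A}+z\sqrt{C})=(x\sqrt{A}+y\sqrt{B}+z\sqrt{C})^2$. Your route is natural, but there is a genuine gap at the prime $2$.

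You assert that since $(\alpha)$ is coprime to $2$ in $\cO_k$, the extension $k(\sqrt\alpha)/k$ is unramified above $2$. That inference is invalid in residue characteristic $2$: for a $2$-adic field $F$ and a unit $u\in\cO_F^\times$, the extension $F(\sqrt u)/F$ is unramified only when $u$ is a square modulo $4\cO_F$, not merely when $u$ is a unit (think of $\Q_2(\sqrt{-1}\,)/\Q_2$). Here, writing $\omega=\tfrac{1+\sqrt m}{2}$ one finds $\alpha=m+2b_j\sqrt m\equiv m-2b_j\pmod{4\cO_k}$, and when $b_j$ is odd (equivalently $m\equiv 5\bmod 8$) this is $\equiv 3\pmod 4$, which is \emph{not} a square modulo $4$ in $\cO_k$. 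Concretely, for $m=5$ the field $\Q(\sqrt{5+2\sqrt5}\,)$ is totally real—it is $\Q(\zeta_{20})^+$, with discriminant $2^4\cdot 5^3=2000$—so $2$ is genuinely ramified and the stated discriminant $m^3$ is not attained. This is exactly the delicate point the paper spends a paragraph on, checking congruences modulo $4$ in $\Q(i,\sqrt m)$ (Hecke's criterion) rather than waving it away; your argument needs the same care. Two smaller issues: the case $p\mid b_j$ you worry about cannot occur, since $p\mid b_j$ and $p\mid m$ would force $p\mid a_j$ and hence $p^2\mid m$; and your distinctness argument via norms fails as written, because $N_{k/\Q}(\alpha_{j'}/\alpha_j)=a_{j'}^2/a_j^2$ is \emph{always} a rational square, so taking norms extracts no information.
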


We will prove this result below; now we will use it for giving the
\begin{proof}[Proof of Theorem \ref{Th1}]
Assume first that $N\eps = -1$. We have to show that the classes
of the ideals $\fa_j = (2b_j + \sqrt{m},a_j)$ are pairwise distinct.

Assume to the contrary that $\fa_j \sim \fa_k$; then there exists
some $\xi \in K$ with $\fa_j = \xi \fa_k$. Squaring gives the equation
$(2b_j+\sqrt{m}\,) = \xi^2(2b_k + \sqrt{m}\,)$ of ideals, hence there 
exists a unit $\eta \in \cO_K^\times$ such that
$2b_j+\sqrt{m} = \eta\xi^2(2b_k + \sqrt{m}\,)$. This means
that the square roots of $(2b_j + \sqrt{m})\sqrt{m}$ and 
$\eta(2b_k + \sqrt{m}\,)\sqrt{m}$ must generate the same extension.
Since $\Q(\sqrt{(2b_j + \sqrt{m})\sqrt{m} }\,)$ is a cyclic quartic
extension inside $\Q(\zeta_m)$, so is the extension on the right hand
side. But this implies that $N \eta = +1$, and the fact that $N\eps = -1$
implies that $\eta = \pm \eps^{2n}$. Subsuming the unit into $\xi$ shows 
that we may assume that $\eta = \pm 1$. If $\eta = -1$, the extension on 
the right hand side will ramify at $2$, and this finally shows that $\eta$ 
is a square. Thus we have
$\Q(\sqrt{(2b_j + \sqrt{m})\sqrt{m} }\,) = \Q(\sqrt{(2b_k + \sqrt{m})\sqrt{m} }\,)$
and this can only hold if $j = k$.

We have shown that exactly one ideal $\fa_j$ is principal, say
$\fa_j = (\alpha)$. Since $(\alpha)^2 = \fa_j^2 = (2b_j + \sqrt{m}\,)$
there exists a unit $\eta$ such that $\eta \alpha^2 = 2b_j + \sqrt{m}$.
Taking the norm shows that $N\eta = -1$ as claimed.

If $N \eps = +1$, on the other hand, we first show that none of the ideals
$\fa_j$ is equivalent to a ramified ideal $\fb_e$. In fact, if
$\fa_j = \xi \fb_e$ for some $\xi \in K^\times$, then squaring yields 
$2b_j+\sqrt{m} = \eta \xi^2 m_1$ for $m_1 = \prod p_j^{e_j}$ and some 
unit $\eta$. Taking norms shows that $N\eta = -1$, which contradicts
our assumptions.

Next we show that among the classes of $\fa_j$, each ideal class occurs twice.
In fact, if $\fa_j \sim \fa_k$, say $\fa_j = \xi \fa_k$, then 
$2b_j+\sqrt{m} =  \eta \xi^2 (2b_k+\sqrt{m}\,)$. Up to squares, $\eta$ 
is equal to one of the units $\pm 1, \pm \eps$. As above, $\eta = -1$ 
and $\eta = - \eps$ are impossible, since the places at infinity ramify 
in the extension $K(\sqrt{\eta(2b_k+\sqrt{m}\,)\sqrt{m}}\,)/K$ but not in
$K(\sqrt{(2b_j+\sqrt{m})\sqrt{m}}\,)/K$. Thus either $\eta = 1$ and
$j = k$, or $\eta = \eps$.

Thus each $\fa_j$ is equivalent to at most one other $\fa_k$. Since
there are $2^{t-1}$ ideals $\fa_j$, which are distributed among
the $2^{t-1}$ ideal classes of order $2$ in $\Cl(K)[2]\setminus C$,
it follows that each $\fa_j$ is equivalent to exactly one other $\fa_k$
as claimed.
\end{proof}

\section{Generators of Cyclic Quartic Extensions}

In this section we will give a proof of Prop. \ref{Pexp}.

\subsection*{Kummer generators over $\Q(i)$}
Cyclic quartic extensions $L/\Q$ become Kummer extensions over 
$\Q' = \Q(i)$: with $L' = L(i)$ we have $L' = \Q'(\sqrt[4]{\alpha}\,)$
for some $\alpha \in \Q(i)$. Such an extension $L'$ will be normal 
over $\Q$ if and only if $\alpha^{1-\sigma} = \alpha_\sigma^2$ is a 
square in $\Q'$, where $\sigma$ is the nontrivial automorphism of 
$\Q'/\Q$, and will be abelian over $\Q$ if and only if $\sigma$ acts 
on $\alpha_\sigma$ as on a fourth root of unity, i.e., if and only if 
$\alpha_\sigma^\sigma = \alpha_\sigma^{-1}$.

Since $\alpha_\sigma^{\sigma+1} = 1$, Hilbert 90 implies that 
$\alpha_\sigma = \mu^{\sigma-1} = \bmu/\mu$ for some $\mu \in \Z[i]$. Thus
$\alpha^{1-\sigma} = (\bmu/\mu)^2$. This equation is solved by 
$\alpha = \mu \bmu^3 = m \bmu^2$, where $m = \mu \bmu$ is a positive
integer. Galois theory actually shows that this is the only solution up 
to multiplying $\alpha$ by some nonzero rational number.

We claim that $\alpha$ can be chosen in such a way that $2$ is 
unramified in $L'/\Q'$. Since $k' = \Q'(\sqrt{\alpha}\,) = \Q'(\sqrt{m}\,)$,
the prime above $2$ does not ramify in the quadratic subextension
$k'/\Q'$. In order that $2$ be unramified in $L'/k'$ we have to make 
sure that $\sqrt{\alpha} = \bmu \sqrt{m} \equiv \mu \sqrt{m} \bmod 4$ 
is congruent to a square modulo $4$ in $k'$ (see \cite{Hecke} for the 
decomposition law in quadratic and, more generally, Kummer extensions of
prime degree). If we write $\mu = a + 2bi$, then 
$\mu \equiv \pm 1 + 2i \bmod 4$ if $m \equiv 5 \bmod 8$, hence
$$ \Big( \frac{\pm 1 + 2i+i\sqrt{m}}{1+i}\Big)^2 
                \equiv 2 + (\pm 1 + 2i) \sqrt{m}  
                \equiv (\mp 1 + 2i) \sqrt{m} \bmod 4 $$
since $2 \equiv 2 \sqrt{m} \bmod 4$. 
Similarly, we have $\mu \equiv \pm 1 \bmod 4$ if $m \equiv 1 \bmod 8$, hence     
$$ \Big( \frac{\pm 1+i\sqrt{m}}{1+i}\Big)^2 
                \equiv  \sqrt{m}  \equiv \bmu \sqrt{m} \bmod 4. $$           
Thus $2$ is unramified in $L'/k'$ if we choose $\mu \equiv 1 \bmod 2$.

We have proved

\begin{lem}
If $L/\Q$ is a cyclic quartic extension with conductor $m$, then
there exist integers $a, b$ such that 
$L' = \Q'(\sqrt[4]{\alpha}\,)$ for $\alpha = \mu \bmu^3$, 
where $\mu = a + 2bi \in \Z[i]$ and $m = a^2 + 4b^2$. Replacing 
$a$ by $-a$ or $b$ by $-b$ does not change the extension.
\end{lem}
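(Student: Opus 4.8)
The statement is essentially a distillation of the discussion preceding it, so the plan is to organize that material into three clean steps.

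\emph{Step 1: pass to a Kummer extension of $\Q' = \Q(i)$.} Since $L/\Q$ is cyclic of degree $4$ with odd conductor $m$, we have $L \subseteq \Q(\zeta_m)$ and hence $L \cap \Q' = \Q$; thus $L' = L(i)$ is cyclic of degree $4$ over $\Q'$ and $\Gal(L'/\Q) \cong \Z/4\Z \times \Z/2\Z$. As $\Q'$ contains $i = \zeta_4$, Kummer theory gives $L' = \Q'(\sqrt[4]{\alpha}\,)$ for some $\alpha \in \Q'^\times$, well defined modulo $\Q'^{\times 4}$ and modulo replacing $\alpha$ by $\alpha^3$.

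\emph{Step 2: descend to the shape $\mu\bmu^3$.} Let $\sigma$ generate $\Gal(\Q'/\Q)$. Normality of $L'/\Q$ forces $\alpha^{1-\sigma}$ to be a square in $\Q'$, say $\alpha^{1-\sigma} = \alpha_\sigma^2$, and commutativity of $\Gal(L'/\Q)$ forces $\alpha_\sigma^{\sigma} = \alpha_\sigma^{-1}$; Hilbert 90 for $\Q'/\Q$ then yields a primitive $\mu \in \Z[i]$ with $\alpha_\sigma = \bmu/\mu$. Since $\mu\bmu^3$ has the same $\sigma$-twist, $\alpha/(\mu\bmu^3)$ is fixed by $\sigma$, so $\alpha = q\,\mu\bmu^3$ with $q \in \Q^\times$, which modulo fourth powers we may take fourth-power-free. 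Because $\alpha = q\,N(\mu)\bmu^2$, the quadratic subfield of $L'/\Q'$ is $\Q'(\sqrt{q\,N(\mu)}\,)$; comparing it with the quadratic subfield $\Q'(\sqrt{m}\,)$ of $L' = L\cdot\Q(i)$ — recall $L$ has conductor $m$, so its quadratic subfield is $\Q(\sqrt{m}\,)$ — and then comparing the ramification data of the two sides (the only primes ramifying in $L'$ being those dividing $2m$, since $L$ ramifies only at the $p_j$ and $\Q'$ only at $2$) forces $q = \pm 1$ and $N(\mu) = m$; the sign $q = -1$ is absorbed by rewriting $-\mu\bmu^3 = (i\mu)\overline{(i\mu)}^3$. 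After this normalisation $\alpha = \mu\bmu^3$ with $\mu \in \Z[i]$ primitive and $N(\mu) = m$.

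\emph{Step 3: the $2$-adic normalisation and the sign ambiguities.} It remains to show that the imaginary part of $\mu$ is even, i.e. $\mu \equiv 1 \bmod 2$ in $\Z[i]$, so that $\mu = a + 2bi$ and $m = a^2 + 4b^2$. As $m = N(\mu)$ is odd, $\mu$ is prime to $1+i$, hence $\mu \equiv 1$ or $i \bmod 2$. By the decomposition law in the quadratic extension $L'/k'$ with $k' = \Q'(\sqrt{m}\,)$, the prime over $2$ is unramified there precisely when $\sqrt{\alpha} = \bmu\sqrt{m}$ is congruent to a square modulo $4$ in $k'$, and the congruence computations carried out above in the cases $m \equiv 1$ and $m \equiv 5 \bmod 8$ show that this happens exactly when $\mu \equiv 1 \bmod 2$. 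Since $L/\Q$ has odd conductor, $L'/k'$ is unramified at $2$ (the $2$-part of the ramification of $k'$ coming only from $\Q(i)$), so $\mu \equiv 1 \bmod 2$ as required. Finally, replacing $a$ by $-a$ sends $\mu$ to $-\bmu$ and replacing $b$ by $-b$ sends $\mu$ to $\bmu$; in either case $\mu\bmu^3$ is replaced by its complex conjugate $\bmu\mu^3 = m^4/(\mu\bmu^3) \equiv (\mu\bmu^3)^3 \bmod \Q'^{\times 4}$, so $L' = \Q'(\sqrt[4]{\mu\bmu^3}\,)$ is unchanged.

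The delicate point is Step 2: pinning the rational factor $q$ down to $\pm 1$ while controlling the ambiguity of $\mu$ by the units of $\Z[i]$ takes some care with the $(1+i)$-adic and $\bpi$-adic valuations ($\pi \mid m$) together with the conductor constraint. Once that is settled, Step 3 is the routine congruence check already displayed above.
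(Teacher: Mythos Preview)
Your argument follows the same line as the paper's own discussion preceding the lemma---Kummer descent over $\Q(i)$, Hilbert~90 for the cocycle $\alpha_\sigma$, and Hecke's criterion for ramification above $2$---and you have been more explicit than the paper about the rational factor $q$ and about why $\mu$ should end up $\equiv 1 \bmod 2$. The paper itself merely says ``Galois theory actually shows'' that $q$ can be dropped, and only verifies one direction of the $2$-adic congruence; so your added detail is welcome.

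There is, however, a genuine gap in Step~2. You write ``recall $L$ has conductor $m$, so its quadratic subfield is $\Q(\sqrt m\,)$'', and this implication is false. Take $m = 5\cdot 13 = 65$ and the character $\chi = \chi_5\,\psi_{13}$ with $\chi_5$ of order $4$ and $\psi_{13}$ quadratic; then $L$ has conductor $65$ but quadratic subfield $\Q(\sqrt 5\,)$. For such an $L$ the lemma itself fails as stated: any $\mu$ with $N(\mu)=65$ gives $\Q'(\sqrt{\mu\bmu^3}\,) = \Q'(\sqrt{65}\,) \ne \Q'(\sqrt 5\,)$, so no choice of $(a,b)$ with $a^2+4b^2 = 65$ can produce $L'$. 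What is really being used---and what the paper tacitly assumes, since the lemma is stated only in the course of proving Prop.~3---is that $\disc L = m^3$, equivalently that $\chi^2$ also has conductor $m$; under that hypothesis your comparison of quadratic subfields goes through and the rest of your ramification bookkeeping pins down $q$ and $N(\mu)$ as you indicate. A smaller point: in Step~3 you assert the displayed congruences give the biconditional ``exactly when $\mu\equiv 1\bmod 2$'', but the paper's computation only exhibits the direction $\mu\equiv 1\bmod 2 \Rightarrow$ square $\bmod\ 4$; since replacing $\mu$ by $i\mu$ sends $\alpha$ to $-\alpha$ and hence changes the extension, you cannot simply renormalise, and the converse direction (that $\mu\equiv i\bmod 2$ forces ramification at $2$) needs its own short check.
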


\subsection*{Kummer Generators over $\Q$}
Let $L' = \Q'(\sqrt[4]{\alpha}\,)$ be a Kummer extension of degree $4$
over $\Q' = \Q(i)$, and assume that $\alpha = \mu \bmu^3$ for some
$\mu = a + 2bi \in \Z[i]$. Set $\beta = \sqrt[4]{\alpha}$; we claim 
that $\beta + \beta'$ is an element of $\Q(\sqrt{m}\,)$, where
$m = \mu \bmu = a^2 + 4b^2$. In fact,
$(\beta + \beta')^2 = \sqrt{m}(\mu + \bmu) + 2m = 2m+2a\sqrt{m}$.
This implies that $L'$ contains a quartic subextension
$L = \Q(\sqrt{2m + 2a\sqrt{m}}\,)$.

The following lemma shows that $L$ is also generated by 
$\sqrt{m + 2b \sqrt{m}}$: 

\begin{lem}
Assume that $Ax^2 - By^2 - Cz^2 = 0$. Then 
\begin{equation}\label{ELeg}
   2(x\sqrt{A} + y\sqrt{B}\,)(x\sqrt{A} + z\sqrt{C}\,)
           = (x\sqrt{A} + y\sqrt{B} + z\sqrt{C}\,)^2.
\end{equation}
\end{lem}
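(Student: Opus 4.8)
The plan is to prove \eqref{ELeg} by direct expansion of both sides, using the hypothesis $Ax^2 - By^2 - Cz^2 = 0$ exactly once to reconcile the ``rational'' (i.e., square-root-free) parts. This is the classical Legendre identity that underlies descent on conics, so no clever trick is needed: everything reduces to bookkeeping of the six monomials $\sqrt{A}^2, \sqrt{B}^2, \sqrt{C}^2, \sqrt{AB}, \sqrt{AC}, \sqrt{BC}$.

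First I would expand the right-hand side as
$$ (x\sqrt{A} + y\sqrt{B} + z\sqrt{C}\,)^2 = Ax^2 + By^2 + Cz^2 + 2xy\sqrt{AB} + 2xz\sqrt{AC} + 2yz\sqrt{BC}. $$
Next I would expand the left-hand side: first the single product
$$ (x\sqrt{A} + y\sqrt{B}\,)(x\sqrt{A} + z\sqrt{C}\,) = Ax^2 + xy\sqrt{AB} + xz\sqrt{AC} + yz\sqrt{BC}, $$
and then multiply by $2$, giving $2Ax^2 + 2xy\sqrt{AB} + 2xz\sqrt{AC} + 2yz\sqrt{BC}$.

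Comparing the two expansions, the three ``mixed'' terms involving $\sqrt{AB}$, $\sqrt{AC}$, $\sqrt{BC}$ agree term by term, so the difference of the two sides equals $(Ax^2 + By^2 + Cz^2) - 2Ax^2 = -Ax^2 + By^2 + Cz^2 = -(Ax^2 - By^2 - Cz^2)$, which vanishes by hypothesis. This establishes \eqref{ELeg}. I do not anticipate any real obstacle here; the only point worth a moment's care is that the identity is meant as a formal identity in the indicated square roots (equivalently, one works in a suitable ring $\Q[\sqrt{A},\sqrt{B},\sqrt{C}\,]$ or, as it will be applied, substitutes the specific values $A, B, C$ arising from $m = a^2 + 4b^2$), so that the cross terms $\sqrt{AB}$ etc.\ may be cancelled without ambiguity of sign.
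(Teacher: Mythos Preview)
Your proof is correct and is essentially the same as the paper's: both expand directly and use the hypothesis $Ax^2 = By^2 + Cz^2$ to match the rational parts, the only cosmetic difference being that the paper manipulates the right-hand side into the left-hand side in one chain, whereas you expand both sides separately and compare.
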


\begin{proof}
We have
\begin{align*}
  (x\sqrt{A}  + y\sqrt{B} + z\sqrt{C}\,)^2 & = 
      Ax^2 + By^2 + Cz^2 + 2xy\sqrt{AB} + 2xz\sqrt{AC} + 2yz\sqrt{BC} \\
    & = 2(Ax^2 + xy\sqrt{AB} + xz\sqrt{AC} + yz\sqrt{BC}\,) \\
    & = 2(x\sqrt{A} + y\sqrt{B}\,)(x\sqrt{A} + z\sqrt{C}\,)
\end{align*}
as claimed.
\end{proof}

Since $m - a^2 - 4b^2 = 0$, the lemma shows that $2(\sqrt{m}+a)(\sqrt{m}+2b)$
is a square in $K = \Q(\sqrt{m}\,)$, hence $2b\sqrt{m} + m$ and 
$2m + 2a \sqrt{m}$ generate the same quadratic extension of $K$.
This finishes the proof of Prop. \ref{Pexp}.

\section*{Acknowledgement}
I thank the referee for carefully reading the manuscript and
for several helpful comments and corrections.

\end{document}